\DeclareFontFamily{OMX}{yhex}{}
\DeclareFontShape{OMX}{yhex}{m}{n}{<->yhcmex10}{}
\DeclareSymbolFont{yhlargesymbols}{OMX}{yhex}{m}{n}
\DeclareMathAccent{\wideparen}{\mathord}{yhlargesymbols}{"F3}
\newtheorem{theorem}{Theorem}
\newtheorem{proposition}[theorem]{Proposition}
\newtheorem{lemma}[theorem]{Lemma}
\theoremstyle{definition}
\newtheorem{definition}[theorem]{Definition}
\theoremstyle{theorem}
\newtheorem*{corollary*}{Corollary}
\theoremstyle{definition}
\newtheorem*{definition*}{Definition}
\newtheorem*{remark*}{Remark}
\begin{document}

\title{When Periodicity Fails to Guarantee the Existence of Rotation:\\ A Counterexample on the 3-torus  with a Nilpotent Linearization}

\author{W. Oukil\thanks{Corresponding author. Email: \texttt{oukil.walid@gmail.com}} \\
\small\text{Faculty of Mathematics.}\\
\small\text{University of Science and Technology Houari Boumediene.}\\
\small\text{ BP 32 EL ALIA 16111 Bab Ezzouar, Algiers, Algeria.}}

\date{April 14, 2025}

\maketitle
\begin{abstract}
In this manuscript, we construct an explicit counterexample of  smooth infinitely differentiable, periodic dynamical system on the 3-torus for   which the rotation vector exists in the weak sense but fails to exist in the strong sense of bounded deviation. The construction uses Liouville-type arithmetic resonances and shows that periodicity and infinite differentiability alone do not guarantee a bounded drift deviation, even for integrable flows.

To the best of our knowledge, this is the first example of a smooth integrable torus flow with unbounded rotational deviation whose Jacobian is everywhere nilpotent.: the Jacobian matrix is strictly nilpotent, all its eigenvalues are identically zero, and consequently no local exponential stretching or contraction occurs anywhere. The unbounded deviation from the linear drift is generated purely by the accumulation of infinitely many incommensurable frequencies, without any amplification mechanism. This purely neutral local dynamics makes the example particularly relevant for coupled phase oscillator models, where the linearized dynamics around a synchronized state is typically nilpotent or neutral.

Moreover, this work extends the theory of trigonometric polynomial fields. It was previously shown that when the frequency spectrum $\Lambda_f$ is finite, a strong rotation vector always exists. The present counterexample demonstrates that as soon as the spectrum becomes infinite (while retaining full regularity) the strong rotation vector can disappear. Thus, the finiteness of the spectrum cannot be replaced by smoothness alone.
\end{abstract}

\begin{keywords}
Periodic system, rotation vector, Liouville number, almost-periodic function, trigonometric series, bounded deviation, nilpotent linearization.
\end{keywords}

\begin{MSC}
34D05, 37B65, 34C15, 37C10.
\end{MSC}

\section{Introduction}

For vector fields on the torus defined by trigonometric polynomials, the existence of the rotation vector in the strong sense is well established \cite{Oukil2023}. In this setting, trajectories admit a linear asymptotic drift with uniformly bounded deviation.

For general smooth periodic vector fields, however, the situation is more subtle.  Classical results on almost periodic differential equations identify regimes where rotation-like behavior holds \cite{Fink1974,Saito1971_01}, but they do not exclude arithmetic effects.

A smooth two-dimensional counterexample was recently constructed in \cite{BrianeHerve2023}, showing that \(C^{\infty}\) regularity alone does not guarantee bounded deviation. The present work  originates from the finite-frequency framework \(\Lambda_f\) introduced in \cite{Oukil2023}. We extend the analysis to an infinite, yet rapidly decaying, frequency spectrum and construct an explicit integrable example on \(\mathbb{T}^3\) where the weak rotation vector exists, but the deviation from the linear drift is unbounded due to Liouville-type resonances.

The dimension three is used here because the first two components generate a Liouville resonance in the third component. More importantly, the vector field possesses a strictly nilpotent linearization: all eigenvalues of the Jacobian are zero, and no local exponential stretching or contraction occurs. The pathological behavior is therefore caused purely by the accumulation of incommensurable frequencies, without any amplification mechanism. This structural feature distinguishes our example from the two-dimensional one and makes it especially relevant for phase models such as the Winfree and Kuramoto systems, where the linearized dynamics around a synchronized state is typically neutral or nilpotent \cite{AriaratnamStrogatz,kuramoto1, WinfreeModel}.

The paper is organized as follows. Section~2 recalls the finite-frequency theory and formulates the central question. Section~3 fixes the definitions of weak and strong rotation vectors. Section~4 contains the explicit construction of the three-dimensional counterexample, the proof of its \(C^{\infty}\) regularity, the computation of the weak rotation vector, and the proof that the strong rotation vector does not exist. Section~5 is devoted to the structural analysis of the counterexample, emphasizing the nilpotent linearization. Section~6 concludes the paper and formulates several open questions.

\section{Motivation and Background: From Finite to Infinite Spectrum}

The starting point of the present work is the theory of \emph{trigonometric polynomial vector fields} developed in \cite{Oukil2023}. A trigonometric polynomial on \(\mathbb{R}^n\) is a finite linear combination of exponentials \(e^{i\langle x,p\rangle}\) with \(p\in 2\pi\mathbb{Z}^n\). For such a field
\[
f(x)=\sum_{p\in\Lambda_f} a_p[f]\, e^{i\langle x,p\rangle},\qquad \Lambda_f\text{ finite},
\]
it was proved in \cite{Oukil2023} that every solution \(x(t)\) of \(\dot{x}=f(x)\) admits a \emph{strong rotation vector} \(\rho\in\mathbb{R}^n\): the limit
\[
\rho=\lim_{t\to\infty}\frac{x(t)}{t}
\]
exists, and moreover the deviation is uniformly bounded,
\[
\sup_{t\geq 0}\|x(t)-\rho t\|<\infty.
\]

The proof relies in an essential way on the finiteness of the frequency set \(\Lambda_f\). When the spectrum is finite, all Fourier series involved are actually finite sums, and the Diophantine conditions needed to invert small divisors can be satisfied without loss of regularity. In this finite-dimensional frequency setting, the flow always behaves asymptotically like a linear drift plus a bounded almost-periodic oscillation.

A natural question immediately arises: is the strong rotation property a consequence of the \(C^{\infty}\) regularity of the field, or of the finiteness of its spectrum? In other words, if we allow an \emph{infinite} Fourier spectrum while keeping the field smooth (i.e., with rapidly decaying coefficients), does the strong rotation vector persist? 

\section{Definitions and Framework}

Let $\mathbb{T}^3 = \mathbb{R}^3/\mathbb{Z}^3$ denote the 3-dimensional torus. For a flow $\varphi^t: \mathbb{T}^3 \to \mathbb{T}^3$, consider a lift $X: \mathbb{R} \to \mathbb{R}^3$ satisfying $\pi(X(t)) = \varphi^t(\pi(X(0)))$ where $\pi: \mathbb{R}^3 \to \mathbb{T}^3$ is the canonical projection.

\begin{definition}[Rotation vectors]
1. The {rotation vector exists in the weak sense} if
\[
\rho \coloneqq \lim_{t\to \infty}\frac{X(t) - X(0)}{t}
\]
exists in $\mathbb{R}^3$.

2. The {rotation vector exists in the strong sense} if $\rho$ exists and moreover
\[
\sup_{t\geq 0}\| X(t) - X(0) - \rho t\| < \infty.
\]
\end{definition}

\section{The counterexample}

\subsection{Construction and regularity}

Let $r \in \mathbb{R}$ be a Liouville number. By definition, for each $n \geq 1$ there exist integers $p_n \geq 2$ and $q_n \geq 1$ such that
\[
0 < |r p_n - q_n| < p_n^{-n }.
\] 
Using the fact that $p_n\geq2$, fix a sequence $(p_m, q_m)_{m\in \mathbb{N}^*}$ such that
\begin{equation}\label{eq:liouville-approx}
0 < |r p_m - q_m| < p_m^{-m }<|r p_{m-1} - q_{m-1}|,\quad \forall m\geq1.
\end{equation} 
Define the vector field $h: \mathbb{T}^3 \to \mathbb{R}^3$ by
\[
h(z) = \left(r,\ 1,\ \sum_{m=1}^{\infty} \, m\, p_m^{-m+1} \, \cos(2\pi(z_1 p_m - z_2 q_m))\right).
\]

\begin{lemma}[Smoothness]\label{lem:smooth}
The vector field $h$ belongs to $C^{\infty}(\mathbb{T}^3)$.
\end{lemma}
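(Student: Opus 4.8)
The plan is to reduce the statement to the third component of $h$: the first two components are constant functions on $\mathbb{T}^3$, hence trivially $C^\infty$, and since no component of $h$ depends on $z_3$, it suffices to prove that
\[
f(z_1,z_2) \coloneqq \sum_{m=1}^{\infty} m\, p_m^{-m}\,\cos\!\big(2\pi(z_1 p_m - z_2 q_m)\big)
\]
belongs to $C^{\infty}(\mathbb{T}^2)$. Each summand is $\mathbb{Z}^2$-periodic (the frequencies $p_m,q_m$ are integers), so it is enough to show that $f$ and all its termwise partial derivatives converge uniformly on $\mathbb{R}^2$, and then to invoke the classical theorem that a series of $C^1$ functions which converges at a point and whose derived series converges uniformly may be differentiated term by term; iterating this over the differentiation order yields $C^\infty$.

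First I would record the elementary arithmetic bound that controls $q_m$ in terms of $p_m$. From \eqref{eq:liouville-approx} we have $|r p_m - q_m| < p_m^{-m} \le 1$, hence $|q_m| < |r|\,p_m + 1 \le (|r|+1)\,p_m =: C_0\,p_m$, using $p_m \ge 1$; note $C_0 \ge 1$. Next, for a multi-index $\alpha=(\alpha_1,\alpha_2)$ with $|\alpha|=k$, termwise differentiation gives
\[
\partial^{\alpha}\!\Big(m\,p_m^{-m}\cos\!\big(2\pi(z_1 p_m - z_2 q_m)\big)\Big) = \pm (2\pi)^{k}\, m\, p_m^{-m}\, p_m^{\alpha_1}\,(-q_m)^{\alpha_2}\, g_m(z),
\]
where $g_m$ is $\cos$ or $\sin$, so this term has sup-norm at most $(2\pi)^{k} m\, p_m^{-m}\, p_m^{\alpha_1} q_m^{\alpha_2} \le (2\pi C_0)^{k}\, m\, p_m^{\,k-m}$, where we used $p_m^{\alpha_1}q_m^{\alpha_2}\le C_0^{\alpha_2}p_m^{\alpha_1+\alpha_2}\le C_0^{k}p_m^{k}$.

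The heart of the argument is then the claim that $\sum_{m\ge 1} m\, p_m^{\,k-m} < \infty$ for every fixed $k$, which holds because $p_m \ge 2$: split the sum at $m=k$; the finitely many terms with $m \le k$ contribute a finite amount, while for $m>k$ the exponent $k-m$ is negative, so $p_m^{\,k-m} \le 2^{\,k-m}$ and $\sum_{m>k} m\, p_m^{\,k-m} \le 2^{k}\sum_{m\ge 1} m\,2^{-m} = 2^{k+1}<\infty$. By the Weierstrass $M$-test the termwise $\alpha$-differentiated series converges uniformly on $\mathbb{R}^2$ for every $\alpha$ (the case $\alpha=0$ giving convergence of $f$ itself), and the differentiation theorem applied inductively on $|\alpha|$ gives $f \in C^\infty(\mathbb{T}^2)$; hence $h=(r,1,f) \in C^\infty(\mathbb{T}^3)$. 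I do not expect a genuine obstacle here — the only point requiring care is making the $M$-test bound uniform across all differentiation orders simultaneously, which is comfortably handled by the geometric decay $p_m^{-m}$ overwhelming the merely polynomial growth $p_m^{k}$ produced by differentiation (together with $q_m \le C_0 p_m$). It is worth emphasizing that the Liouville numerator bound $p_m^{-m}$ is far stronger than anything $C^\infty$ requires; it is the finer inequality \eqref{eq:liouville-approx} — not smoothness — that will later force the failure of bounded deviation.
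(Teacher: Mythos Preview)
Your argument is correct and follows essentially the same route as the paper: bound $q_m$ by $(|r|+1)p_m$ via \eqref{eq:liouville-approx}, so that the $k$-th order derivative terms are dominated by $C^{k}\,m\,p_m^{\,k-m}$, and then use $p_m\ge 2$ to obtain a summable majorant (the paper phrases the last step via ``Abel's test'' rather than the Weierstrass $M$-test, but the content is the same). Your write-up is in fact a bit more explicit about the iterated termwise-differentiation step.
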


\begin{proof}
For each $k \geq 0$, we estimate the $k$-th derivative of the series termwise:
\[
\sum_{m=1}^{\infty} m\, p_m^{-m+1} \, (2\pi\|(p_m, q_m)\|)^k 
\leq (2\pi)^k \sum_{m=1}^{\infty} m\, p_m^{-m+1} \max\{ p_m^k ,q_m^k \}.
\]
From the Liouville condition we have $|r p_m - q_m| < p_m^{-m }$, which implies $q_m \le |r| \, p_m + p_m^{-m }$. Since $p_m \geq 2$, the term $p_m^{-m }$ is bounded and decreases as $m \to \infty$. For a fixed $k$ and large $m$, $q_m$ is dominated by the linear term $|r|p_m$. More precisely,
\[
q_m^k < (|r|p_m + p_m^{-m })^k = (|r| + p_m^{-m-1})^k \, p_m^k \le (|r| + 1)^k \, p_m^k,\quad m\geq1.
\]
Thus,
\[
m\, p_m^{-m+1} \max\{ p_m^k ,q_m^k \} \leq m\, (|r| + 1)^k \, p_m^{-m+1+k}.
\]
Since $p_m \geq 2$, we have $p_m^{-m+1+k} \le 2^{-m+1+k}$ for all $m \ge k+2$. Hence for $m \ge k+2$,
\[
m\, p_m^{-m+1} \max\{ p_m^k ,q_m^k \} \le (|r|+1)^k \, 2^{k+1} \, m \, 2^{-m}.
\]
The series $\sum m\, 2^{-m}$ converges, so by the Weierstrass M-test, the series converges absolutely in $C^k(\mathbb{T}^3)$ for every $k$, implying $h \in C^{\infty}(\mathbb{T}^3)$.
\end{proof}

Consider the initial value problem
\begin{equation}\label{eq:system}
\dot{x} = h(x), \qquad x(0) = 0.
\end{equation}
The system is integrable with explicit solution
\begin{equation}\label{phi1}
x_1(t) = rt, \quad x_2(t) = t, \quad 
x_3(t) = \int_0^t \sum_{m=1}^{\infty} m\, p_m^{-m+1} \cos(2\pi(r p_m - q_m)s)\, ds.
\end{equation}

\begin{lemma}[Termwise integration]\label{lem:integration}
For each $t \geq 0$,
\[
x_3(t) = \sum_{m=1}^{\infty} \frac{m\, p_m^{-m+1}}{2\pi (r p_m - q_m)} \sin(2\pi(r p_m - q_m)t).
\]
\end{lemma}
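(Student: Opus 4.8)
The plan is to obtain the formula for $x_3$ in two moves: justify exchanging the infinite sum with the integral in \eqref{phi1}, and then evaluate each resulting elementary integral in closed form. First, note that \eqref{eq:liouville-approx} gives $0 < |r p_m - q_m|$ for every $m \ge 1$, so none of the frequencies vanishes, and for each fixed $m$ one has the elementary antiderivative
\[
\int_0^t \cos\!\big(2\pi(r p_m - q_m)s\big)\,ds = \frac{\sin\!\big(2\pi(r p_m - q_m)t\big)}{2\pi(r p_m - q_m)} .
\]

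Second, to legitimize the term-by-term integration I would appeal to the Weierstrass $M$-test, exactly as in Lemma~\ref{lem:smooth}: since $p_m \ge 2$,
\[
\big| m\, p_m^{-m}\cos\!\big(2\pi(r p_m - q_m)s\big)\big| \le m\, p_m^{-m} \le m\, 2^{-m} \qquad (s \in \mathbb{R}),
\]
and $\sum_{m\ge1} m\,2^{-m} < \infty$, so the series defining the integrand in \eqref{phi1} converges uniformly on $\mathbb{R}$, in particular on each compact interval $[0,t]$. Uniform convergence of a series of continuous functions on $[0,t]$ permits integrating term by term, which combined with the first step yields
\[
x_3(t) = \sum_{m=1}^{\infty} m\, p_m^{-m}\,\frac{\sin\!\big(2\pi(r p_m - q_m)t\big)}{2\pi(r p_m - q_m)} = \sum_{m=1}^{\infty} \frac{m}{2\pi p_m^{m}(r p_m - q_m)}\,\sin\!\big(2\pi(r p_m - q_m)t\big),
\]
as claimed; in particular the right-hand series converges, its sum being the finite quantity $x_3(t)$, and at $t=0$ every term vanishes, consistent with $x(0)=0$.

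The only step demanding any care is the interchange of summation and integration, and this is dispatched by the same summable majorant $m\,2^{-m}$ already used for smoothness, so I anticipate no genuine obstacle. It is worth remarking that the lower bound on $|r p_m - q_m|$ from \eqref{eq:liouville-approx} plays no role in this lemma — convergence of the output series comes for free from the identity — but it should become decisive in the subsequent analysis of whether $x_3(t) - \rho_3 t$ remains bounded, since the coefficients $\tfrac{m}{2\pi p_m^{m}(r p_m - q_m)}$ can be large precisely when $r p_m - q_m$ is abnormally small.
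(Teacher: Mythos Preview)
Your proof is correct and follows essentially the same route as the paper: bound each term by $m\,p_m^{-m}$ (summable since $p_m\ge 2$), invoke the Weierstrass $M$-test to justify swapping sum and integral, and then integrate termwise using $|r p_m - q_m|>0$. The only addition is your closing remark about the role of the small-divisor lower bound, which is accurate commentary but not part of the paper's argument for this lemma.
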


\begin{proof}
Since $p_m \geq 2$, the series $\sum_{m=1}^{\infty} m\, p_m^{-m+1}$ converges. By the Weierstrass M-test, for every fixed $t>0$ the series converges uniformly on $[0,t]$. Hence integration and summation commute. We obtain
\begin{align*} 
x_3(t) &= \int_0^t \sum_{m=1}^{\infty} m\, p_m^{-m+1} \cos(2\pi(r p_m - q_m)s)\, ds\\
&= \sum_{m=1}^{\infty} m\, p_m^{-m+1} \int_0^t \cos(2\pi(r p_m - q_m)s)\, ds.
\end{align*}
By \eqref{eq:liouville-approx} we have $|r p_m - q_m| > 0$, therefore
\[
\int_0^t \cos(2\pi(r p_m - q_m)s)\, ds = \frac{1}{2\pi}\frac{1}{r p_m - q_m} \sin(2\pi(r p_m - q_m)t).
\]
\end{proof}

\subsection{Weak rotation vector}

\begin{proposition}[Weak rotation vector]\label{prop:weak}
The rotation vector of \eqref{eq:system} exists in the weak sense and equals $\rho = (r,1,0)$.
\end{proposition}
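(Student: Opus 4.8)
The plan is to use that $X(0)=0$, so the assertion reduces to showing $X(t)/t \to (r,1,0)$ componentwise as $t\to\infty$. For the first two coordinates this is immediate from the explicit solution \eqref{phi1}: $x_1(t)/t = r$ and $x_2(t)/t = 1$ for every $t>0$. The entire content of the proposition is therefore the single statement that $x_3(t)/t \to 0$, which I would obtain from the closed form furnished by Lemma~\ref{lem:integration}.

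The key device is a head/tail splitting of the series
\[
x_3(t) = \sum_{m=1}^{\infty} \frac{m}{2\pi p_m^{m}(rp_m - q_m)}\sin\bigl(2\pi(rp_m - q_m)t\bigr)
\]
at an index $N$ to be chosen at the end. On the tail $m > N$ I would apply the elementary inequality $|\sin\theta| \le |\theta|$, which gives
\[
\Bigl|\sum_{m>N} \frac{m}{2\pi p_m^{m}(rp_m - q_m)}\sin\bigl(2\pi(rp_m - q_m)t\bigr)\Bigr| \le t\sum_{m>N} m\,p_m^{-m}.
\]
This is the crucial cancellation: the Liouville-small factor $(rp_m-q_m)$ in the denominator is exactly absorbed by the same factor produced by bounding $\sin$, leaving the tail of the convergent series $\sum_m m\,p_m^{-m}$ (convergent because $p_m\ge 2$, hence $m\,p_m^{-m}\le m\,2^{-m}$). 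On the head $m\le N$ I would simply use $|\sin|\le 1$ and absorb everything into the finite constant $C_N \coloneqq \sum_{m=1}^{N} \frac{m}{2\pi p_m^{m}|rp_m - q_m|}$, which is finite since $|rp_m-q_m|>0$ by \eqref{eq:liouville-approx}.

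Combining the two estimates yields, for all $t>0$,
\[
\frac{|x_3(t)|}{t} \le \sum_{m>N} m\,p_m^{-m} + \frac{C_N}{t},
\]
and the conclusion follows from the usual two-parameter limit argument: given $\varepsilon>0$, first fix $N$ so that $\sum_{m>N} m\,p_m^{-m} < \varepsilon/2$ (possible since the series converges), then choose $T$ with $C_N/t < \varepsilon/2$ for all $t>T$; thus $|x_3(t)|/t < \varepsilon$ for $t>T$. Hence $x_3(t)/t\to 0$ and $\rho=(r,1,0)$ exists in the weak sense. There is no genuine obstacle here; the only point requiring care is that the head constant $C_N$ may be extremely large (each $|rp_m-q_m|$ is below $p_m^{-m}$, so no uniform control on $C_N$ in $N$ is available), so one must fix $N$ before letting $t\to\infty$ and must not attempt a bound on the full series that is uniform in $t$ — such a bound is in fact false, which is precisely the mechanism behind the strong-sense failure established in the sequel.
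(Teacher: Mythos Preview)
Your proof is correct and follows essentially the same head/tail splitting as the paper's own argument. The only cosmetic difference is that you work from the closed form of Lemma~\ref{lem:integration} and bound the tail via $|\sin\theta|\le|\theta|$, whereas the paper bounds $|\cos|\le 1$ before integrating; the two routes yield identical estimates.
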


\begin{proof}
From \eqref{phi1} we have $x_1(t)/t = r$ and $x_2(t)/t = 1$ exactly. For the third component,
\[
x_3(t) = \int_0^t \sum_{m\geq 1} m\, p_m^{-m+1} \cos\left(2\pi(r p_m - q_m)s\right) ds.
\]
Let $N \in \mathbb{N}^*$. Split the sum:
\begin{align*}
x_3(t) &= \sum_{m=1}^{N} m\, p_m^{-m+1} \int_0^t \cos\left(2\pi(r p_m - q_m)s\right) ds \\
&\quad + \int_0^t \sum_{m> N} m\, p_m^{-m+1} \cos\left(2\pi(r p_m - q_m)s\right) ds \\
&=: I_N(t) + J_N(t).
\end{align*}
For $I_N(t)$ we use the explicit integration and the boundedness of $\sin$:
\[
|I_N(t)| \le \sum_{m=1}^{N} \frac{m\, p_m^{-m+1}}{2\pi\,|r p_m - q_m|},
\]
which is independent of $t$. Hence $\lim_{t\to\infty} t^{-1} I_N(t) = 0$.  
For $J_N(t)$ we estimate directly:
\[
|J_N(t)| \le \int_0^t \sum_{m> N} m\, p_m^{-m+1} \, ds = t \sum_{m> N} m\, p_m^{-m+1}.
\]
Thus
\[
\limsup_{t\to\infty} \frac{|x_3(t)|}{t} \le \sum_{m> N} m\, p_m^{-m+1}.
\]
Because $\sum_{m=1}^\infty m\, p_m^{-m+1}$ converges, the right-hand side tends to $0$ as $N\to\infty$. Consequently, $\lim_{t\to\infty} x_3(t)/t = 0$.  
The weak rotation vector is therefore $(r,1,0)$.
\end{proof}

\subsection{Failure of strong rotation vector}

\begin{proposition}[Unbounded deviation]\label{prop:unbounded}
The function $t \mapsto x_3(t)$ is unbounded. Consequently, the rotation vector does not exist in the strong sense.
\end{proposition}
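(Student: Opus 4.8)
The plan is to exploit the Liouville gap condition \eqref{eq:liouville-approx} to isolate one dominant term in the series for $x_3$ at a carefully chosen time, and to show that at that time the whole series is large. Fix $N \geq 1$ and set $t_N \coloneqq \frac{1}{4|r p_N - q_N|}$, so that $\sin(2\pi(r p_N - q_N)t_N) = \pm 1$ and the $N$-th term of the series from \Cref{lem:integration} has absolute value
\[
\left|\frac{N}{2\pi p_N^{N}(r p_N - q_N)}\right| = \frac{N}{2\pi p_N^{N}|r p_N - q_N|} > \frac{N}{2\pi},
\]
where the last inequality uses $|r p_N - q_N| < p_N^{-N}$ from \eqref{eq:liouville-approx}. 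So the candidate ``dominant'' term alone grows like $N$.

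Next I would control the remaining terms at $t = t_N$. Split the tail $\sum_{m \neq N}$ into $m > N$ and $m < N$. For $m > N$, bound $|\sin| \leq 1$ and $|r p_m - q_m| \geq$ (a positive constant, or just use $|\sin(2\pi(r p_m-q_m)t_N)| \le 2\pi|r p_m - q_m| t_N$ which kills the denominator), giving $\sum_{m>N} \frac{m}{2\pi p_m^m |r p_m - q_m|}$; since $p_m^m|r p_m - q_m|$ could be small, the safer route is $|\sin(\theta)| \le |\theta|$, yielding a bound $\sum_{m>N} m p_m^{-m} t_N$ — but $t_N$ is large, so instead I must use $|\sin| \le 1$ together with the \emph{lower} gap bound $|r p_m - q_m| < p_{m-1}^{-(m-1)}$ read backwards: actually \eqref{eq:liouville-approx} gives $|r p_m - q_m| < |r p_{m-1} - q_{m-1}|$, i.e. the gaps are decreasing, which does not directly lower-bound them. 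The clean fix is to use $t_N$ but bound each tail term $m>N$ by $\min\{m p_m^{-m} t_N,\ \frac{m}{2\pi p_m^m|r p_m-q_m|}\}$ and observe $\frac{1}{p_m^m |r p_m - q_m|} $ need not be summable; hence the correct estimate is the trivial one $|\sin| \le 2\pi |rp_m - q_m| t_N$, giving tail $\le t_N \sum_{m>N} m p_m^{-m} \le t_N \cdot C p_{N+1}^{-(N+1)} \le \frac{1}{4|rp_N - q_N|} \cdot C p_{N+1}^{-(N+1)}$, and since by \eqref{eq:liouville-approx} $|r p_N - q_N| > p_{N+1}^{-(N+1)}$... wait, that is again the wrong direction.

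The genuinely robust argument, and the one I would commit to, avoids choosing $t_N$ cleverly and instead argues by contradiction: suppose $x_3$ is bounded, say $|x_3(t)| \le M$ for all $t \ge 0$. Each summand $f_m(t) \coloneqq \frac{m}{2\pi p_m^m(rp_m - q_m)}\sin(2\pi(rp_m - q_m)t)$ is periodic with period $T_m = 1/|rp_m - q_m|$; by \eqref{eq:liouville-approx} these periods grow super-fast, $T_m > p_m^m \ge 2^m$, and in fact the ratios $T_m/T_{m-1}$ are large. One then extracts the $N$-th Fourier–Bohr coefficient: the amplitude of $f_N$ is $A_N \coloneqq \frac{N}{2\pi p_N^m |rp_N - q_N|} > \frac{N}{2\pi}$ (using $|rp_N-q_N|<p_N^{-N}$). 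Since the frequencies $\{|rp_m - q_m|\}$ are all distinct and $x_3$ is a uniformly convergent sum of the $f_m$, it is a Bohr almost-periodic function whose Fourier–Bohr coefficient at frequency $|rp_N - q_N|$ has modulus $A_N/2 > N/(4\pi)$; but every Fourier–Bohr coefficient of a function bounded by $M$ has modulus $\le M$. Taking $N > 4\pi M$ gives a contradiction, so $x_3$ is unbounded. The main obstacle is making the Fourier–Bohr extraction rigorous — i.e. justifying that $\lim_{T\to\infty}\frac1T\int_0^T x_3(t) e^{-2\pi i |rp_N-q_N| t}\,dt = \frac{1}{2i}\cdot(\pm A_N)$, which follows from uniform convergence of the series together with the fact that the frequencies $|rp_m - q_m|$ are pairwise distinct and incommensurable enough that each off-diagonal mean $\frac1T\int_0^T \sin(2\pi(rp_m-q_m)t)e^{-2\pi i|rp_N-q_N|t}dt \to 0$; this last point is immediate because $|rp_m-q_m|\neq|rp_N-q_N|$ for $m\neq N$ by the strict monotonicity in \eqref{eq:liouville-approx}, so the oscillatory integral is $O(1/T)$ uniformly in the tail thanks to the M-test bound $\sum_m m p_m^{-m} < \infty$. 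Finally, I would note the ``consequently'' clause is automatic: if $\rho$ existed in the strong sense then $x_3(t) - 0\cdot t = x_3(t)$ would be bounded by \Cref{prop:weak}, contradicting the above.
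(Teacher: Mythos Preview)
Your committed argument --- assume $|x_3|\le M$, extract the Bohr--Fourier coefficient at frequency $\lambda_N:=rp_N-q_N$, obtain a value of modulus $>N/(4\pi)$, and contradict the trivial bound $\le M$ --- is exactly the paper's strategy. The gap is in the justification of the coefficient extraction. You assert that ``$x_3$ is a uniformly convergent sum of the $f_m$'', but this is false: each amplitude satisfies
\[
A_m=\frac{m}{2\pi p_m^{m}|rp_m-q_m|}>\frac{m}{2\pi}
\]
by \eqref{eq:liouville-approx}, so $\sum_m f_m$ is \emph{not} uniformly convergent (indeed, if it were, $x_3$ would be Bohr almost periodic, hence bounded, and there would be nothing to prove). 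For the same reason the M-test bound $\sum_m m p_m^{-m}<\infty$ that you invoke does not control the tail of $\sum_m f_m$; that bound applies to the series for $\dot x_3$, not to $x_3$ itself. Consequently the interchange
\[
\lim_{T\to\infty}\frac1T\int_0^T x_3(t)\,e^{-2\pi i\lambda_N t}\,dt
\;=\;\sum_m \lim_{T\to\infty}\frac1T\int_0^T f_m(t)\,e^{-2\pi i\lambda_N t}\,dt
\]
is not justified by what you wrote; the right-hand side is a divergent-amplitude series whose termwise $O(1/T)$ estimates do not sum.

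The fix is a single integration by parts, and this is precisely how the paper closes the argument: under the hypothesis $\sup_t|x_3(t)|<\infty$ the boundary term vanishes after dividing by $T$, and one is left with a Ces\`aro mean of $\dot x_3(s)\cos(2\pi\lambda_N s)$. Now the relevant series is $\dot x_3(s)=\sum_m m\,p_m^{-m}\cos(2\pi\lambda_m s)$, which \emph{is} uniformly convergent by the M-test, so truncation at level $N'>N$, orthogonality of the finitely many cosines (using $|\lambda_m|\neq|\lambda_n|$ from \eqref{eq:liouville-approx}), and $N'\to\infty$ give the coefficient $\dfrac{n}{4\pi p_n^{n}\lambda_n}$ rigorously. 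Your heuristic then finishes the proof. In short: right idea, but you must pass to $\dot x_3$ (via one integration by parts) before invoking uniform convergence.
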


\begin{proof}
We argue by contradiction. Suppose that there exists a finite constant $M$ such that
\[
\sup_{t\geq 0} |x_3(t)| \le M.
\]
For every $m \ge 1$ set $\lambda_m := r p_m - q_m$. By \eqref{eq:liouville-approx} we have $|\lambda_m| \neq 0$ and the sequence $(|\lambda_m|)$ is strictly decreasing.

Fix an integer $n \ge 1$ and consider the time average
\[
A_n(t) := \frac{1}{t}\int_0^t x_3(s)\,\sin(2\pi \lambda_n s)\,ds.
\]
The boundedness assumption implies $|A_n(t)| \le M$ for all $t>0$. We will compute the limit of $A_n(t)$ as $t\to\infty$ using integration by parts.

Let $u = x_3(s)$, $dv = \sin(2\pi \lambda_n s)\,ds$. Then $du = \dot{x}_3(s)\,ds$ and $v = -\frac{\cos(2\pi \lambda_n s)}{2\pi \lambda_n}$. Hence
\begin{align*}
\int_0^t x_3(s)\sin(2\pi \lambda_n s)\,ds
&= -\frac{x_3(t)\cos(2\pi \lambda_n t)}{2\pi \lambda_n} + \frac{x_3(0)\cos(0)}{2\pi \lambda_n} \\
&\quad + \frac{1}{2\pi \lambda_n}\int_0^t \dot{x}_3(s)\cos(2\pi \lambda_n s)\,ds .
\end{align*}
Since $x_3(0)=0$ and $|x_3(t)|\le M$, the boundary term is $O(1)$ and therefore
\[
\lim_{t\to\infty} \frac{1}{t}\Bigl( -\frac{x_3(t)\cos(2\pi \lambda_n t)}{2\pi \lambda_n} \Bigr) = 0 .
\]
Consequently,
\begin{equation}\label{eq:limit-av}
\lim_{t\to\infty} A_n(t) = \frac{1}{2\pi \lambda_n} \lim_{t\to\infty} \frac{1}{t}\int_0^t \dot{x}_3(s)\cos(2\pi \lambda_n s)\,ds .
\end{equation}

From the differential equation \eqref{eq:system} and the construction,
\[
\dot{x}_3(s) = \sum_{m=1}^\infty m\, p_m^{-m+1} \cos(2\pi \lambda_m s),
\]
where the series converges absolutely and uniformly on $\mathbb{R}$ because $\sum m\, p_m^{-m+1} < \infty$.

Insert this expansion into \eqref{eq:limit-av}. Since

\[
\sum_{m=1}^{\infty} m\,p_m^{-m+1}<\infty
\]

and

\[
\left|
\frac1t
\int_0^t
\cos(2\pi\lambda_n s)
\cos(2\pi\lambda_m s)\,ds
\right|
\le 1,
\]

the dominated convergence theorem for series allows us to interchange
the limit and the summation. Hence

\begin{align*}
\lim_{t\to\infty} A_n(t)
&=
\frac{1}{2\pi\lambda_n}
\sum_{m=1}^{\infty}
m\,p_m^{-m+1}
\lim_{t\to\infty}
\frac1t
\int_0^t
\cos(2\pi\lambda_n s)
\cos(2\pi\lambda_m s)\,ds .
\end{align*}
For each $m$, the product of cosines satisfies
\[
\cos(2\pi \lambda_n s)\cos(2\pi \lambda_m s) = \frac{1}{2}\Bigl[ \cos\bigl(2\pi(\lambda_m-\lambda_n)s\bigr) + \cos\bigl(2\pi(\lambda_m+\lambda_n)s\bigr) \Bigr].
\]
If $m = n$, the term $\cos(0)=1$ gives a non‑zero average:
\[
\frac{1}{t}\int_0^t \cos^2(2\pi \lambda_n s)\,ds = \frac12 + O\Bigl(\frac1t\Bigr) \;\longrightarrow\; \frac12 .
\]
If $m \neq n$, the frequencies $\lambda_m \pm \lambda_n$ are non‑zero (the sequence $|\lambda_m|$ is strictly decreasing), hence
\[
\int_0^t \cos(2\pi(\lambda_m\pm\lambda_n)s)\,ds = \frac{\sin(2\pi(\lambda_m\pm\lambda_n)t)}{2\pi(\lambda_m\pm\lambda_n)}
\]
remains bounded uniformly in $t$, and the corresponding contribution divided by $t$ tends to $0$.

Because the series $\sum m\, p_m^{-m+1}$ converges absolutely, we can pass to the limit term by term, obtaining
\[
\lim_{t\to\infty} A_n(t) = \frac{1}{2\pi \lambda_n} \cdot n\, p_n^{-n+1} \cdot \frac12 = \frac{n\, p_n^{-n+1}}{4\pi \lambda_n}.
\]
Taking absolute values and using $|A_n(t)| \le M$,
\[
\frac{n\, p_n^{-n+1}}{4\pi\,|\lambda_n|} \le M \qquad \text{for every } n \ge 1.
\]
The Liouville condition \eqref{eq:liouville-approx} gives $|\lambda_n| < p_n^{-n}$, hence $1/|\lambda_n| > p_n^{n}$. Therefore
\[
\frac{n\, p_n^{-n+1}}{4\pi\,|\lambda_n|} > \frac{n\, p_n^{-n+1} \cdot p_n^{n}}{4\pi} = \frac{n\, p_n}{4\pi} \ge \frac{n\cdot 2}{4\pi} = \frac{n}{2\pi}.
\]
Thus we obtain $n/(2\pi) < M$ for all $n \ge 1$, an impossibility. The contradiction shows that $x_3(t)$ cannot be bounded, and therefore the rotation vector does not exist in the strong sense.
\end{proof}

\section{Structural analysis: nilpotent linearization}

A key feature of this counterexample is the structure of its linearization. The Jacobian matrix of \(h\) at any point \(z=(z_1,z_2,z_3)\) is
\[
Dh(z) = \begin{pmatrix}
0 & 0 & 0 \\
0 & 0 & 0 \\
\partial_{z_1}h_3 & \partial_{z_2}h_3 & 0
\end{pmatrix}.
\]
All its eigenvalues are exactly zero; the matrix is strictly nilpotent. Hence, there is no local exponential stretching or contraction of nearby trajectories: the dynamics is completely neutral from the viewpoint of linear stability analysis.

Such a purely neutral setting is especially relevant for phase models of coupled oscillators, such as the Winfree and Kuramoto systems, where the coupling typically involves only the phases and the linearization around a synchronized state is nilpotent or has zero eigenvalues.

\section{Conclusion and questions}

We have constructed an explicit \(C^\infty\) periodic vector field on \(\mathbb{T}^3\) whose flow admits a weak rotation vector \(\rho=(r,1,0)\) but fails to possess a strong rotation vector: the deviation \(x_3(t)\) is unbounded. The linearization of this field is strictly nilpotent (all eigenvalues are zero), so no local exponential stretching or contraction occurs. Thus the unbounded deviation does not rely on any linear amplification mechanism.

A smooth counterexample with a Liouville slope was already known in dimension two. The present three‑dimensional example is therefore not the first \(C^\infty\) counterexample; its novelty lies in two features. First, the nilpotent structure shows that the loss of the strong rotation vector can happen in a completely neutral local environment, a situation natural for phase oscillator models. Second, the construction directly extends the finite‑frequency framework of \cite{Oukil2023}: that paper proved that a trigonometric polynomial field (finite spectrum) always admits a strong rotation vector. The example demonstrates that the finiteness assumption cannot be replaced by \(C^\infty\) regularity alone, because an infinite but rapidly decaying spectrum suffices to destroy the strong rotation property.

Several natural questions remain open:

\begin{enumerate}
\item \textbf{Arithmetic conditions:} Characterize the Diophantine properties of frequency vectors that guarantee bounded deviation for smooth (or analytic) periodic systems. Is a Diophantine condition necessary and sufficient?

\item \textbf{Regularity versus decay:} How does the required decay rate of Fourier coefficients depend on the arithmetic type of the frequencies to ensure bounded deviation?

\item \textbf{Non-integrable perturbations:} Does the phenomenon persist under small non-integrable perturbations? What happens in nearly-integrable Hamiltonian systems?

\item \textbf{Higher-dimensional generalisations:} Can similar counterexamples be constructed on \(\mathbb{T}^n\) for \(n>3\) with rotation sets instead of vectors?
\end{enumerate}

These questions highlight the interplay between arithmetic, regularity, and asymptotic dynamics in periodic systems, an interplay that is often implicit in applied studies of oscillatory networks.


\begin{thebibliography}{99}
 
\bibitem{AriaratnamStrogatz} J.T. Ariaratnam, and S.H. Strogatz, Phase Diagram for the Winfree Model of Coupled Nonlinear Oscillators, Phys. Rev. Lett. 86, 4278 (2001).

\bibitem{BrianeHerve2023} M. Briane, L.  Herv\'e, Fine asymptotic expansion of the ODE's flow, J. Differential Equations, 373 (2023), 327-358. 
\bibitem{Fink1974} A. M. Fink, Almost Periodic Differential Equations, Lecture Notes in Mathematics 377, Springer, 1974.
 
\bibitem{Oukil2023} W. Oukil, Ordinary differential equations defined by a trigonometric polynomial field: behaviour of the solutions, \textit{Dynamical Systems, Taylor \& Francis}, \textbf{38}, 217-235. (2023)

\bibitem{kuramoto1} Y. Kuramoto, International symposium on mathematical problems in theoretical physics, Lecture Notes in Physics, Springer, New York, 39 (1975), pp. 420-422.

\bibitem{Saito1971_01} T. Saito, On dynamical systems in n-dimensional torus. Proceedings of the Symposium on Differential Equations and Dynamical Systems, Lecture Notes in Mathematics, Vol. 206 (1971), pp. 18-19. 

\bibitem{WinfreeModel} A. T. Winfree, Biological rhythms and the behavior of populations of coupled oscillators, {\it J. Theor. Biol.} 16 (1967), pp. 15-42.

\end{thebibliography}
\end{document}